\theoremstyle{plain}
\newtheorem{theorem}{Theorem}[section]
\newtheorem{lemma}[theorem]{Lemma}
\newtheorem{corollary}[theorem]{Corollary}
\newtheorem{proposition}[theorem]{Proposition}
\theoremstyle{definition}
\newtheorem{definition}{Definition}[section]
\newtheorem{example}{Example}[section]
\theoremstyle{remark}
\newtheorem{remark}{Remark}[section]
\DeclareMathOperator{\tr}{tr}%
\DeclareMathOperator{\re}{Re}
\newcommand{\N}{\mathbf N}
\newcommand{\R}{\mathbf R}
\newcommand{\f}{\varphi}
\newcommand{\e}{\varepsilon}
\renewcommand{\d}{\mathrm d}
\newcommand{\skp}[2]{\left<#1,#2\right>}
\newcommand{\C}{\mathbf C}
\renewcommand{\a}{\alpha}
\newcommand{\luin}{\left|\!\left|\!\left|}
\newcommand{\ruin}{\right|\!\right|\!\right|}
\newcommand{\ovl}{\overline}
\newcommand{\Sc}{\mathcal C}
\numberwithin{equation}{section} 
\begin{document}
\title[The applications of Cauchy-Schwartz inequality]{The applications of Cauchy-Schwartz inequality for Hilbert modules to elementary operators and i.p.t.i.\ transformers} 

\author{Dragoljub J. Ke\v cki\' c}
\address{University of Belgrade\\ Faculty of Mathematics\\ Student\/ski trg 16-18\\ 11000 Beograd\\ Serbia}

\email{keckic@matf.bg.ac.rs}

\thanks{The author was supported in part by the Ministry of education and science, Republic of Serbia, Grant \#174034.}

\begin{abstract}
We apply the inequality $\left|\left<x,y\right>\right|\le||x||\,\left<y,y\right>^{1/2}$ to give an easy and elementary proof of many operator inequalities for elementary operators and inner type product integral transformers obtained during last two decades, which also generalizes all of them.
\end{abstract}


\subjclass[2010]{Primary: 47A63, Secondary: 47B47, 47B10, 47B49, 46L08}

\keywords{Cauchy Schwartz inequality, unitarily invariant
norm, elementary operator, inner product type transformers.}

\maketitle

\section{Introduction}

Let $A$ be a Banach algebra, and let $a_j$, $b_j\in A$. Elementary operators, introduced by Lummer and Rosenblum in \cite{LumerRosenblum} are mappings from $A$ to $A$ of the form
\begin{equation}\label{ElOp}
x\mapsto\sum_{j=1}^na_jxb_j.
\end{equation}
Finite sum may be replaced by infinite sum provided some convergence condition.

A similar mapping, called inner product type integral transformer (i.p.t.i.\ transformers in further), considered in \cite{BananaJFA2005}, is defined by
\begin{equation}\label{InnerTT}
X\mapsto\int_\Omega\mathcal A_tX\mathcal B_t\d\mu(t),
\end{equation}
where $(\Omega,\mu)$ is a measure space, and $t\mapsto\mathcal A_t$, $\mathcal B_t$ are fields of operators in $B(H)$.

During last two decades, there were obtained a number of inequalities involving elementary operators on $B(H)$ as well as i.p.t.i.\ type transformers. The aim of this paper is two give an easy and elementary proof of those proved in \cite{BananaPAMS1998,BananaJLMS1999,BananaJFA2005,Ludaci5autora,BananaGeorg,BananaStefan,LudaciFilomat} and \cite{StefanAOT2016} using the Cauchy Schwartz inequality for Hilbert $C^*$-modules -- the inequality stated in the abstract, which also generalizes all of them.

\section{Preliminaries}

Throughout this paper $A$ will always denote a semifinite von Neumann algebra, and $\tau$ will denote a semifinite trace on $A$. $L^p(A;\tau)$ will denote the non-commutative $L^p$ space, $L^p(A;\tau)=\{a\in A~|~\|a\|_p=\tau(|a|^p)^{1/p}<+\infty\}$.

It is well known that $L^1(A;\tau)^*\cong A$, $L^p(A;\tau)^*\cong L^q(A;\tau)$, $1/p+1/q=1$. Both dualities are realized by
$$L^p(A;\tau)\ni a\mapsto\tau(ab)\in\C,\qquad b\in L^q(A;\tau)~\mbox{or}~b\in A.$$

For more details on von Neumann algebras the reader is referred to \cite{Takesaki}, and for details on $L^p(A,\tau)$ to \cite{Kosaki}.

Let $M$ be a \emph{right} Hilbert $W^*$-module over $A$. (Since $M$ is right we assume that $A$-valued inner product is $A$-linear in second variable, and adjoint $A$-linear in the first.) We assume, also, that there is a faithful left action of $A$ on $M$, that is, an embedding (and hence an isometry) of $A$ into $B^a(M)$ the algebra of all adjointable bounded $A$-linear operators on $M$. Hence, for $x$, $y\in M$ and $a$, $b\in A$ we have
$$\skp xya=\skp x{ya},\quad\skp{xa}y=a^*\skp xy,\quad\skp x{ay}=\skp{a^*x}y.$$

For more details on Hilbert modules, the reader is referred to \cite{Lance} or \cite{Manuilov}.

We quote the basic property of $A$-valued inner product, a variant od Cauchy-Schwartz inequality.

\begin{proposition} Let $M$ be a Hilbert $C^*$-module over $A$. For any $x$, $y\in M$ we have
\begin{equation}\label{CS}
|\skp xy|^2\le\|x\|^2\skp yy,\qquad |\skp xy|\le\|x\|\skp yy^{1/2},
\end{equation}
in the ordering of $A$.
\end{proposition}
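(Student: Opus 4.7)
The plan is to derive both inequalities from the positivity axiom $\skp{z}{z}\ge 0$ applied to a cleverly chosen element $z=xa-y\in M$, where $a\in A$ will be specified later. Expanding using sesquilinearity and the module identities gives
$$0\le\skp{xa-y}{xa-y}=a^*\skp{x}{x}a-a^*\skp{x}{y}-\skp{y}{x}a+\skp{y}{y}.$$

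My first step would be to majorize the term $\skp{x}{x}$: since $\|x\|^2=\|\skp{x}{x}\|$, we have $\skp{x}{x}\le\lambda\cdot 1$ for every scalar $\lambda\ge\|x\|^2$ (working in the unitization $\widetilde A$ if $A$ is non-unital, observing that the eventual inequality involves only elements of $A$). Hence $a^*\skp{x}{x}a\le\lambda a^*a$, and the expansion rearranges to
$$\skp{y}{y}\ge a^*\skp{x}{y}+\skp{y}{x}a-\lambda a^*a.$$
Assuming $\lambda\ne 0$ (the case $x=0$ being trivial), I would then make the optimizing choice $a=\skp{x}{y}/\lambda\in A$. Each of the three terms on the right becomes a scalar multiple of $|\skp{x}{y}|^2=\skp{y}{x}\skp{x}{y}$, and the net contribution simplifies to $|\skp{x}{y}|^2/\lambda$. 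Multiplying through by $\lambda$ and specializing to $\lambda=\|x\|^2$ yields $\|x\|^2\skp{y}{y}\ge|\skp{x}{y}|^2$, which is the first inequality in \eqref{CS}.

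For the second inequality, I would appeal to operator monotonicity of the square-root function on the positive cone of $A$: from $|\skp{x}{y}|^2\le\|x\|^2\skp{y}{y}$ it follows that $(|\skp{x}{y}|^2)^{1/2}\le(\|x\|^2\skp{y}{y})^{1/2}$. Since $\|x\|^2$ is a scalar, the right-hand side equals $\|x\|\skp{y}{y}^{1/2}$, and the left-hand side equals $|\skp{x}{y}|$ by definition of the modulus. There is essentially no obstacle in the proof; the only mildly delicate point is handling the possibly non-unital algebra $A$ via its unitization, which is a standard device in the Hilbert $C^*$-module literature.
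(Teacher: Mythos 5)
Your proof is correct and is essentially the standard argument: the paper does not prove this proposition itself but cites Lance and Manuilov, whose proof is exactly your expansion of $0\le\skp{xa-y}{xa-y}$ with the optimizing choice $a=\skp xy/\lambda$, $\lambda=\|x\|^2$ (Lance normalizes $\|x\|\le1$ instead, which is the same device). Your deduction of the second inequality from the first via operator monotonicity of $t\mapsto t^{1/2}$ is precisely the observation the paper makes immediately after the statement.
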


The proof can be found in \cite[page 3]{Lance} or \cite[page 3]{Manuilov}. Notice: $1^\circ$~the left inequality implies the right one, since $t\mapsto t^{1/2}$ is operator increasing function; $2^\circ$~Both inequalities holds for $A$-valued \emph{semi-inner} product, i.e.\ even if $\skp\cdot\cdot$ may be degenerate.

Finally, we need a counterpart of Tomita modular conjugation.

\begin{definition} Let $M$ be a Hilbert $W^*$-module over a semifinite von Neumann algebra $A$, and let there is a left action of $A$ on $M$.

A (possibly unbounded) mapping $J$, defined on some $M_0\subseteq M$ with values in $M$, we call modular conjugation if it satisfies: ($i$) $J(axb)=b^*J(x)a^*$; ($ii$) $\tau(\skp{J(y)}{J(x)})=\tau(\skp xy)$ whenever $\skp xx$, $\skp yy$, $\skp{J(x)}{J(x)}$, $\skp{J(y)}{J(y)}\in L^1(A,\tau)$.

In what follows, we shall use simpler notation $\ovl x$ instead of $J(x)$. Thus, the determining equalities become
\begin{equation}\label{Conjug}
\ovl{axb}=b^*\ovl xa^*,\qquad\tau(\skp{\ovl y}{\ovl x})=\tau(\skp xy).
\end{equation}

The module $M$ together with left action of $A$ and the modular conjugation $J$ we shall call \emph{conjugated $W^*$-module}.
\end{definition}

\begin{definition} Let $M$ be a conjugated $W^*$-module over $A$. We say that $x\in M_0$ is \emph{normal}, if ($i$) $\skp xx x=x\skp xx$, ($ii$) $\skp xx=\skp{\ovl x}{\ovl x}$.
\end{definition}

\begin{remark}It might be a nontrivial question, whether $J$ can be defined on an arbitrary Hilbert $W^*$-module in a way similar to the construction of Tomita's modular conjugation (see \cite{Tomita}). However for our purpose, the preceding definition is enough.
\end{remark}

Examples of conjugated modules are following.

\begin{example} Let $A$ be a semifinite von Neumann algebra, and let $M=A^n$. For $x=(x_1,\dots,x_n)$, $y=(y_1,\dots,y_n)\in M$, $a\in A$, define right multiplication, left action of $A$, the $A$-valued inner product and modular conjugation by
\begin{equation}\label{LeftRight}
xa=(x_1a,\dots,x_na),\qquad ax=(ax_1,\dots,ax_n);
\end{equation}
\begin{equation}\label{InnerConjugate}
\skp xy=x_1^*y_1+\dots+x_n^*y_n,\qquad \ovl x=(x_1^*,\dots,x_n^*).
\end{equation}
All required properties are easily verified. The element $x=(x_1,\dots,x_n)$ is normal whenever all $x_j$ are normal and mutually commute.

We have
$$\skp x{ay}=\sum_{j=1}^nx_j^*ay_j,$$
which is the term of the form (\ref{ElOp}).
\end{example}

There are two important modules with infinite number of summands.

\begin{example}\label{Standard} Let $A$ be a semifinite von Neumann algebra. We consider the standard Hilbert module $l^2(A)$ over $A$ and its dual module $l^2(A)'$ defined by
$$l^2(A)=\Big\{(x_1,\dots,x_n,\dots)~\Big|~\sum_{k=1}^{+\infty}a_k^*a_k~\mbox{converges in norm of}~A\Big\}.$$
$$l^2(A)'=\Big\{(x_1,\dots,x_n,\dots)~\Big|~\Big\|\sum_{k=1}^na_k^*a_k\Big\|\le M<+\infty\Big\}.$$
(It is clear that $x\in l^2(A)'$ if and only if the series $\sum x_k^*x_k$ weakly converges.)

The basic operation on these modules are given by (\ref{LeftRight}) and (\ref{InnerConjugate}) with infinite number of entries.

The main difference between $l^2(A)$ ($l^2(A)'$ respectively) and $A^n$ is the fact that $\ovl x=(x_1^*,\dots,x_n^*,\dots)$ is defined only on the subset of $l^2(A)$ consisting of those $x\in M$ for which $\sum x_kx_k^*$ converges in the norm of $A$.

The element $x=(x_1,\dots,x_n,\dots)\in M_0$ is normal whenever all $x_j$ are normal and mutually commute.
\end{example}

\begin{remark} The notation $l^2(A)'$ comes from the fact that $l^2(A)'$ is isomorphic to the module of all adjointable bounded $A$-linear functionals $\Lambda:M\to A$.

For more details on $l^2(A)$ or $l^2(A)'$ see \cite[\S1.4 and \S2.5]{Manuilov}.
\end{remark}

\begin{example} Let $A$ be a semifinite von Neumann algebra and let $(\Omega,\mu)$ be a measure space.
Consider the space $L^2(\Omega,A)$ consisting of all weakly-$*$ measurable functions such that $\int_\Omega\|x\|^2\d\mu<+\infty$. The weak-$*$ measurability is reduced to the measurability of functions $\f(x(t))$ for all normal states $\f$, since the latter generate the predual of $A$.

Basic operations are given by
$$x(t)\cdot a=x(t)a,\quad a\cdot x(t)=ax(t),\quad\skp xy=\int_\Omega x(t)^*y(t)\d\mu(t),\quad\ovl x(t)=x(t)^*.$$

All required properties are easily verified. Mapping $x\mapsto\ovl x$ is again defined on a proper subset of $L^2(\Omega,A)$. The element $x$ is normal if $x(t)$ is normal for almost all $t$, and $x(t)x(s)=x(s)x(t)$ for almost all $(s,t)$.

Again, for $a\in A$ we have
$$\skp x{ay}=\int_\Omega x(t)^*ay(t)\d\mu(t),$$
which is the term of the form (\ref{InnerTT}).
\end{example}

Thus, norm estimates of elementary operators (\ref{ElOp}), or i.t.p.i.\ transformers (\ref{InnerTT}) are estimates of the term $\skp x{ay}$.

In section 4 we need two more examples.

\begin{example}\label{Interior} Let $M_1$ and $M_2$ be conjugated $W^*$-modules over a semifinite von Neumann algebra $A$. Consider the \emph{interior} product of Hilbert modules $M_1$ and $M_2$ constructed as follows.
The linear span of $x_1\otimes x_2$, $x_1\in M_1$, $x_2\in M_2$ subject to the relations
$$a(x_1\otimes x_2)=ax_1\otimes x_2,\qquad x_1a\otimes x_2=x_1\otimes ax_2,\qquad (x_1\otimes x_2)a=x_1\otimes x_2a,$$
and usual bi-linearity of $x_1\otimes x_2$, can be equipped by an $A$-valued semi-inner product
\begin{equation}\label{InteriorInner}
\skp{x_1\otimes x_2}{y_1\otimes y_2}=\skp{x_2}{\skp{x_1}{y_1}y_2}.
\end{equation}

The completion of the quotient of this linear span by the kernel of (\ref{InteriorInner}) is denoted by $M_1\otimes M_2$ and called \emph{interior tensor product} of $M_1$ and $M_2$. For more details on tensor products, see \cite[Chapter 4]{Lance}.

If $M_1=M_2=M$, $M\otimes M$ can be endowed with a modular conjugation by
$$\ovl{x_1\otimes x_2}=\ovl{x_2}\otimes\ovl{x_1}.$$
All properties are easily verified. Also, $x$ normal implies $x\otimes x$ is normal and $\skp{x\otimes x}{x\otimes x}=\skp xx^2$.
\end{example}

\begin{example}\label{Fock}Let $M_n$, $n\in\N$ be conjugated modules. Their infinite direct sum $\bigoplus_{n=1}^{+\infty}M_n$ is the module consisting of those sequences $(x_n)$, $x_n\in M_n$ such that $\sum_{n=1}^{+\infty}\skp{x_n}{x_n}$ weakly converges, with the $A$-valued inner product
$$\skp{(x_n)}{(y_n)}=\sum_{n=1}^{+\infty}\skp{x_n}{y_n}.$$

The modular conjugation can be given by $\ovl{(x_n)}=(\ovl{x_n})$. Specially, we need the \emph{full Fock module}
$$F=\bigoplus_{n=0}^{+\infty}M^{\otimes n},$$
where $M^{\otimes0}=A$, $M^{\otimes1}=M$, $M^{\otimes2}=M\otimes M$, $M^{\otimes3}=M\otimes M\otimes M$, etc.

For $x\in M$, $\|x\|<1$ the element $\sum_{n=0}^{+\infty}x^{\otimes n}\in F$ (where $x^{\otimes0}:=1$) is well defined. It is normal whenever $x$ is normal. Also, for normal $x$, we have
\begin{equation}\label{SkpTensor}
\skp{\sum_{n=0}^{+\infty}x^{\otimes n}}{\sum_{n=0}^{+\infty}x^{\otimes n}}=\sum_{n=0}^{+\infty}\skp{x^{\otimes n}}{x^{\otimes n}}=\sum_{n=0}^{+\infty}\skp xx^n=(1-\skp xx)^{-1}.
\end{equation}
\end{example}

We shall deal with \emph{unitarily invariant norms} on the algebra $B(H)$ of all bounded Hilbert space operators. For more details, the reader is referred to \cite[Chapter III]{GKrein}. We use the following facts. For any unitarily invariant norm $\luin\cdot\ruin$, we have $\luin A\ruin=\luin A^*\ruin=\luin\,|A|\,\ruin=\luin UAV\ruin=\luin A\ruin$ for all unitaries $U$ and $V$, as well as $\|A\|\le\luin A\ruin\le\|A\|_1$. The latter allows the following interpolation Lemma.

\begin{lemma}\label{Interpolacija} Let $T$ and $S$ be linear mappings defined on the space $\Sc_\infty$ of all compact operators on Hilbert space $H$. If
$$\|Tx\|\le\|Sx\|\mbox{ for all }x\in \Sc_\infty,\qquad\|Tx\|_1\le\|Sx\|_1\mbox{ for all }x\in
\Sc_1$$
then
$$\luin Tx\ruin\le\luin Sx\ruin$$
for all unitarily invariant norms.
\end{lemma}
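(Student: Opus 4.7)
The plan is to reduce the inequality for general unitarily invariant norms to a family of Ky Fan norm inequalities via the classical dominance principle, and then to establish each of them via the K-functional representation. By Ky Fan's dominance theorem (see \cite[Chapter III]{GKrein}), for any two compact operators $A, B \in \Sc_\infty$, one has $\luin A\ruin \le \luin B \ruin$ for every unitarily invariant norm if and only if the Ky Fan $k$-norms satisfy $\|A\|_{(k)} \le \|B\|_{(k)}$ for each $k \in \N$, where $\|A\|_{(k)} = \sum_{j=1}^{k} s_j(A)$. So the lemma will reduce to proving $\|Tx\|_{(k)} \le \|Sx\|_{(k)}$ for every $x \in \Sc_\infty$ and every $k \ge 1$.

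The key tool is the K-functional representation of the Ky Fan norm for the Banach couple $(\Sc_1, \Sc_\infty)$:
\[
\|A\|_{(k)} = \inf\bigl\{\|A_1\|_1 + k\,\|A_\infty\|\colon A = A_1 + A_\infty,\ A_1 \in \Sc_1,\ A_\infty \in \Sc_\infty\bigr\}.
\]
Given any decomposition $x = x_1 + x_\infty$ of $x$, the two hypotheses immediately yield
\[
\|Tx\|_{(k)} \le \|Tx_1\|_1 + k\|Tx_\infty\| \le \|Sx_1\|_1 + k\|Sx_\infty\|.
\]

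To convert this into the target bound $\|Tx\|_{(k)} \le \|Sx\|_{(k)}$---which is an infimum over decompositions of $Sx$ itself rather than of $x$---my plan is to pass through the quotient map $R\colon \mathrm{range}(S) \to \mathrm{range}(T)$ defined by $R(Sx) := Tx$. This is well-posed because the first hypothesis forces $\ker S \subseteq \ker T$, and the two hypotheses translate into $\|Ry\| \le \|y\|$ and $\|Ry\|_1 \le \|y\|_1$ on the relevant subspaces of $\mathrm{range}(S)$. Applying the K-functional bound to arbitrary decompositions of $y = Sx$, now interpreted via $R$, would then yield the desired Ky Fan inequality.

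The main obstacle will be passing from an arbitrary decomposition $Sx = u + v$ in $\Sc_1 + \Sc_\infty$ to a decomposition of $Tx$, since $u$ and $v$ need not lie in $\mathrm{range}(S)$. The standard remedy is either to extend $R$ to all of $\Sc_\infty$ while preserving both endpoint bounds (via a Hahn--Banach-type construction adapted to the Calder\'on couple $(\Sc_1, \Sc_\infty)$), or to invoke a K-divisibility theorem for this couple. Either route ultimately reduces the lemma to the classical Calder\'on--Mityagin interpolation principle: any linear operator bounded by $1$ on both $\Sc_1$ and $\Sc_\infty$ is bounded by $1$ in every unitarily invariant norm.
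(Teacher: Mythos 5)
Your opening reduction is sound and coincides with the paper's first move: by Ky Fan dominance it suffices to prove $\|Tx\|_{(k)}\le\|Sx\|_{(k)}$ for every $k$, and your description of $\|\cdot\|_{(k)}$ as the K-functional of the couple $(\Sc_1,\Sc_\infty)$ at $t=k$ is the predual form of the paper's dual description $\|\cdot\|_{(k)}^\sharp=\max\{\|\cdot\|,(1/k)\|\cdot\|_1\}$. But the step you yourself flag as ``the main obstacle'' is a genuine, unfilled gap, and none of the remedies you sketch closes it. What your argument actually delivers is $\|Tx\|_{(k)}\le\inf\{\|Sx_1\|_1+k\|Sx_\infty\|:\ x=x_1+x_\infty\}$, an infimum over decompositions of $x$, which can exceed $\|Sx\|_{(k)}$, an infimum over all decompositions of $Sx$. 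Extending $R\colon\mathrm{range}(S)\to\Sc_\infty$ to the whole couple while preserving both endpoint bounds is a Hahn--Banach theorem for operators, which does not exist here ($\Sc_1$ and $\Sc_\infty$ are not injective); K-divisibility produces decompositions only up to a universal constant $\gamma>1$, which ruins the exact inequality; and the Calder\'on--Mityagin principle you cite applies to operators bounded on all of $\Sc_1$ and all of $\Sc_\infty$, which is exactly what you have not constructed.

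Moreover, the gap cannot be closed from the two displayed hypotheses alone. Let $A=\mathrm{diag}(3,1,1)$, $B=\mathrm{diag}(5/2,5/2,0)$ (acting on a three-dimensional subspace of $H$), let $x_{11}$ denote the $(1,1)$ entry of $x$, and set $S(x)=x_{11}A$, $T(x)=x_{11}B$. Then $\|Tx\|=\tfrac52|x_{11}|\le3|x_{11}|=\|Sx\|$ and $\|Tx\|_1=5|x_{11}|=\|Sx\|_1$ for all $x$, yet $\|Tx\|_{(2)}=5|x_{11}|>4|x_{11}|=\|Sx\|_{(2)}$. So the conclusion fails for the Ky Fan $2$-norm: a correct proof must exploit additional structure relating $T$ and $S$ (as is available in the paper's applications, where $S$ is a two-sided multiplication and the domination comes from a common factorization), not merely the two pointwise norm inequalities. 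For what it is worth, the paper's own proof founders on the same rock: the assertion that $\|T^*x\|\le\|S^*x\|$ and $\|T^*x\|_1\le\|S^*x\|_1$ follow ``by duality'' is likewise not a consequence of the pointwise hypotheses. You located the genuine difficulty correctly, but your proposal, like the paper's argument, does not resolve it.
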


\begin{proof} The norms $\|\cdot\|$ and $\|\cdot\|_1$ are dual to each other, in the sense
$$\|x\|=\sup_{\|y\|_1=1}|\tr(xy)|,\qquad \|x\|_1=\sup_{\|y\|=1}|\tr(xy)|.$$
Hence, $\|T^*x\|\le\|S^*x\|$, $\|T^*x\|_1\le\|S^*x\|_1$.

Consider the Ky Fan norm $\|\cdot\|_{(k)}$. Its dual norm is $\|\cdot\|_{(k)}^\sharp=\max\{\|\cdot\|,(1/k)\|\cdot\|_1\}$. Thus, by duality, $\|Tx\|_{(k)}\le\|Sx\|_{(k)}$ and the result follows by Ky Fan dominance property, \cite[\S3.4]{GKrein}.
\end{proof}

\section{Cauchy-Schwartz inequalities}

Cauchy-Schwartz inequality for $\|\cdot\|$ follows from (\ref{CS}), for $\|\cdot\|_1$ by duality and for other norms by interpolation.

\begin{theorem}\label{Prva} Let $A$ be a semifinite von Neumann algebra, let $M$ be a conjugated $W^*$-module over $A$ and let $a\in A$. Then:
\begin{equation}\label{1infty}
\|\skp x{ay}\|\le\|x\|\|y\|\|a\|,\qquad\|\skp x{ay}\|_1\le\|\skp{\ovl x}{\ovl x}^{1/2}a\skp{\ovl y}{\ovl y}^{1/2}\|_1;
\end{equation}
\begin{equation}\label{C2}
\|\skp x{ay}\|_2\le\|x\|\|a\skp{\ovl{y}}{\ovl{y}}^{1/2}\|_2,\quad\mbox{and}\quad
\|\skp x{ay}\|_2\le\|y\|\|\skp{\ovl{x}}{\ovl{x}}^{1/2}a\|_2.
\end{equation}

Specially, if $A=B(H)$, $\tau=\tr$ and $x$, $y$ normal, then
\begin{equation}\label{UIN1}
\luin\skp x{ay}\ruin\le\luin\skp xx^{1/2}a\skp yy^{1/2}\ruin
\end{equation}
for all unitarily invariant norms $\luin\cdot\ruin$.
\end{theorem}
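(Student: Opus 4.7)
The theorem packages four statements, which I attack in order. For the operator-norm estimate in \eqref{1infty} I simply square \eqref{CS}: $|\skp x{ay}|^2\le\|x\|^2\skp{ay}{ay}$, and since $\skp{ay}{ay}=\skp y{a^*ay}\le\|a\|^2\skp yy$ in the $A$-ordering (from $\skp u{av}=\skp{a^*u}{v}$ and positivity of the left action), taking $A$-norms and a square root gives $\|\skp x{ay}\|\le\|x\|\|a\|\|y\|$. For the $L^2$ bounds \eqref{C2}, I apply $\tau$ to the same $A$-valued CS, and the modular conjugation \eqref{Conjug} converts the right-hand side:
\[
\tau(\skp{ay}{ay})=\tau(\skp{\ovl y a^*}{\ovl y a^*})=\tau(a\skp{\ovl y}{\ovl y}a^*)=\|a\skp{\ovl y}{\ovl y}^{1/2}\|_2^2,
\]
giving the first bound; the second follows symmetrically from $\skp x{ay}=\skp{a^*x}{y}$ and the ``other slot'' form of CS, with $\tau(\skp{a^*x}{a^*x})$ rewritten as $\|\skp{\ovl x}{\ovl x}^{1/2}a\|_2^2$ by the same identities.

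The $L^1$ bound in \eqref{1infty} is the main technical step and is where I expect the real difficulty. By trace duality $\|\skp x{ay}\|_1=\sup\{|\tau(b\skp x{ay})|:\|b\|\le 1\}$, and a chain of identities from right $A$-linearity, \eqref{Conjug}, and cyclicity of $\tau$ yields the key identity
\[
\tau(b\skp x{ay})=\tau(\skp x{ayb})=\tau(\skp{\ovl{ayb}}{\ovl x})=\tau(\skp{b^*\ovl y}{\ovl xa}).
\]
I then need to bound this expression by $\|\skp{\ovl x}{\ovl x}^{1/2}a\skp{\ovl y}{\ovl y}^{1/2}\|_1$ uniformly in $\|b\|\le 1$. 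The naive trace Cauchy--Schwartz $|\tau(\skp uv)|\le\tau(\skp uu)^{1/2}\tau(\skp vv)^{1/2}$ applied here only produces the weaker estimate $\|\skp x{ay}\|_1\le\|\skp{\ovl x}{\ovl x}^{1/2}a\|_2\,\|\skp{\ovl y}{\ovl y}^{1/2}\|_2$, which by non-commutative H\"older is \emph{strictly larger} than the desired target. Closing this gap requires factoring $b$ through the polar decomposition of the target operator $\skp{\ovl x}{\ovl x}^{1/2}a\skp{\ovl y}{\ovl y}^{1/2}$ and applying a finer H\"older-type argument tied to the two $L^2$ bounds of paragraph~1; this is where the full strength of \eqref{Conjug} is used.

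Finally, for \eqref{UIN1}, I specialise to $A=B(H)$ with $x,y$ normal. Condition (ii) of normality gives $\skp{\ovl x}{\ovl x}=\skp xx$ and $\skp{\ovl y}{\ovl y}=\skp yy$, so the $L^1$ bound specialises to $\|\skp x{ay}\|_1\le\|\skp xx^{1/2}a\skp yy^{1/2}\|_1$. A matching operator-norm bound $\|\skp x{ay}\|\le\|\skp xx^{1/2}a\skp yy^{1/2}\|$ comes from the $2\times 2$ Gram-matrix positivity of $\begin{pmatrix}\skp xx & \skp x{ay}\\\skp{ay}x & \skp{ay}{ay}\end{pmatrix}$ combined with the commutativity consequences of normality condition (i) and Fuglede's theorem. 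Lemma~\ref{Interpolacija}, applied to the linear maps $a\mapsto\skp x{ay}$ and $a\mapsto\skp xx^{1/2}a\skp yy^{1/2}$, then interpolates both bounds to every unitarily invariant norm, finishing the proof.
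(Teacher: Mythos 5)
Your handling of the operator-norm bound and of \eqref{C2} is essentially correct; for \eqref{C2} you even take a slightly more direct route than the paper, computing $\tau(\skp{ay}{ay})=\tau(\skp{\ovl{ay}}{\ovl{ay}})=\tau(a\skp{\ovl y}{\ovl y}a^*)=\|a\skp{\ovl y}{\ovl y}^{1/2}\|_2^2$ straight from \eqref{Conjug}, whereas the paper deduces \eqref{C2} by applying the $L^1$ inequality of \eqref{1infty} to $\skp y{a^*ay}$. Your duality identity $\tau(b\skp x{ay})=\tau(\skp{b^*\ovl y}{\ovl xa})$ is also exactly the paper's starting point for the $L^1$ bound.

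However, there is a genuine gap, and you flag it yourself: the $L^1$ inequality in \eqref{1infty} is never actually proved, being deferred to an unspecified ``finer H\"older-type argument'' through a polar decomposition. The missing idea is not a refined H\"older inequality but a normalization-and-absorption trick. First treat the normalized case: if $\skp{\ovl x}{\ovl x},\skp{\ovl y}{\ovl y}\le1$, then $\|\ovl x\|,\|\ovl y\|\le1$, so for $\|b\|=1$ one has $|\tau(b\skp x{ay})|=|\tau(a\skp{\ovl y}{b\ovl x})|\le\|a\|_1\|\skp{\ovl y}{b\ovl x}\|\le\|a\|_1$; here the crude $L^1\times L^\infty$ pairing you dismissed is enough, precisely because the target on the right is just $\|a\|_1$. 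For general $x,y$ set $x_1=(\skp{\ovl x}{\ovl x}+\e)^{-1/2}x$ and $y_1=(\skp{\ovl y}{\ovl y}+\e)^{-1/2}y$; the module relations give $\skp x{ay}=\skp{x_1}{(\skp{\ovl x}{\ovl x}+\e)^{1/2}a(\skp{\ovl y}{\ovl y}+\e)^{1/2}y_1}$ with $\skp{\ovl{x_1}}{\ovl{x_1}},\skp{\ovl{y_1}}{\ovl{y_1}}\le1$, so the normalized case applies with the weighted operator in place of $a$, and one lets $\e\to0$. The same gap propagates into \eqref{UIN1}: its $\|\cdot\|_1$ half needs the above, and your sketch of the operator-norm half (Gram positivity of the pair $(x,ay)$ plus Fuglede) does not obviously yield $\|\skp xx^{1/2}a\skp yy^{1/2}\|$ --- Gram positivity only gives $\skp x{ay}=\skp xx^{1/2}K\skp{ay}{ay}^{1/2}$ with $K$ a contraction, which leaves $a$ trapped inside $\skp{ay}{ay}$. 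The paper instead reruns the same normalization trick, using normality condition ($i$) to commute $(\skp xx+\e)^{-1/2}$ past $x$ so that $\|x_1\|,\|y_1\|\le1$, and only then invokes Lemma \ref{Interpolacija}, as you correctly propose for the final step.
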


\begin{proof} By (\ref{CS}), we have $\|\skp x{ay}\|\le\|x\|\|ay\|\le\|x\|\|y\|\|a\|$, which proves the first inequality in (\ref{1infty}).

For the proof of the second, note that by (\ref{Conjug}), for all $a\in L^1(A;\tau)$ we have $\tau(b\skp x{ay})=\tau(\skp{xb^*}{ay})=\tau(\skp{\ovl ya^*}{b\ovl x})=\tau(a\skp{\ovl y}{b\ovl x})$. Hence for $\skp{\ovl x}{\ovl x}$, $\skp{\ovl y}{\ovl y}\le1$
$$\|\skp x{ay}\|_1=\sup_{\|b\|=1}|\tau(b\skp x{ay})|=\sup_{\|b\|=1}|\tau(a\skp{\ovl y}{b\ovl x})|\le\|a\|_1\|\skp{\ovl y}{b\ovl x}\|\le\|a\|_1,$$

In the general case, let $\e>0$ be arbitrary, and let $x_1=(\skp{\ovl x}{\ovl x}+\e)^{-1/2}x$ and $y_1=(\skp{\ovl y}{\ovl y}+\e)^{-1/2}y$. Then $\ovl{x_1}=\ovl x(\skp{\ovl x}{\ovl x}+\e)^{-1/2}$ and $\ovl{y_1}=\ovl y(\skp{\ovl y}{\ovl y}+\e)^{-1/2}$ (by (\ref{Conjug})). Thus
$$\skp{\ovl{x_1}}{\ovl{x_1}}=(\skp xx+\e)^{-1/2}\skp xx(\skp xx+\e)^{-1/2}\le1,$$
by continuous functional calculus. Hence
\begin{align}\label{Racun}
\|\skp x{ay}\|_1&=\left\|\skp{(\skp{\ovl{x}}{\ovl{x}}+\e)^{1/2}x_1}{a(\skp{\ovl{y}}{\ovl{y}}+\e)^{1/2}y_1}\right\|_1=\nonumber\\
    &=\left\|\skp{x_1}{(\skp{\ovl{x}}{\ovl{x}}+\e)^{1/2}a(\skp{\ovl{y}}{\ovl{y}}+\e)^{1/2}y_1}\right\|_1\le\\
    &=\left\|(\skp{\ovl{x}}{\ovl{x}}+\e)^{1/2}a(\skp{\ovl{y}}{\ovl{y}}+\e)^{1/2}\right\|_1,\nonumber
\end{align}
and let $\e\to0$. (Note $\|(\skp{\ovl x}{\ovl x}+\e)^{1/2}-\skp{\ovl x}{\ovl x}^{1/2}\|\le\e^{1/2}$.)

To prove (\ref{C2}), by (\ref{CS}) we have
\begin{equation}\label{Refinement}
|\skp x{ay}|^2\le\|x\|^2\skp{ay}{ay}=\|x\|^2\skp y{a^*ay}.
\end{equation}
Apply $\|\cdot\|_1$ to the previous inequality. By (\ref{1infty}) we obtain
$$\|\skp x{ay}\|_2^2\le\|x\|^2\|\skp y{a^*ay}\|_1\le\|x\|^2\|\skp{\ovl{y}}{\ovl{y}}^{\frac12}a^*a\skp{\ovl{y}}{\ovl{y}}^{\frac12}\|_1=\|x\|^2\|a\skp{\ovl{y}}{\ovl{y}}^{\frac12}\|_2^2.$$
This proves the first inequality in (\ref{C2}). The second follows from duality
$$\|\skp x{ay}\|_2=\|\skp y{a^*x}\|_2\le\|y\|\|a^*\skp{\ovl{x}}{\ovl{x}}^{1/2}\|_2=\|y\|\|\skp{\ovl{x}}{\ovl{x}}^{1/2}a\|_2.$$

Finally, if $A=B(H)$, $\tau=\tr$ and $x$, $y$ normal. Then (\ref{UIN1}) holds for $\|\cdot\|_1$ by (\ref{1infty}). For the operator norm, it follows by normality. Namely then $x\skp xx=\skp xxx$ and we can
repeat argument from (\ref{Racun}). Now, the general result follows from Lemma \ref{Interpolacija}
\end{proof}

\begin{corollary} If $A=B(H)$ and $M=l^2(A)'$ (Example \ref{Standard}), then (\ref{UIN1}) is \cite[Theorem 2.2]{BananaPAMS1998} (the first formula from the abstract). If $M=L^2(\Omega,A)$, $A=B(H)$, (\ref{UIN1}) is \cite[Theorem 3.2]{BananaJFA2005} (the second formula from the abstract).
\end{corollary}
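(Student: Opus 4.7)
The statement is a direct specialization of inequality (\ref{UIN1}) of Theorem \ref{Prva} to the two concrete conjugated modules already set up in the Examples, so my plan is to do nothing more than substitute the module-specific formulas for $\skp{\cdot}{\cdot}$ and $\ovl{\cdot}$ into (\ref{UIN1}) and check that the result coincides, symbol for symbol, with the cited theorems.

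For the first identification, I would take $A=B(H)$ and $M=l^2(A)'$ as in Example \ref{Standard}, fix $a\in B(H)$ playing the role of the input $X$ of the elementary operator, and let $x=(x_k)$, $y=(y_k)$ be normal elements of $M_0$. Using $\skp xy=\sum_k x_k^*y_k$ and the left action $ay=(ay_k)$ directly gives $\skp x{ay}=\sum_k x_k^*a y_k$, and normality of each $x_k$ together with mutual commutativity makes $\skp xx=\skp{\ovl x}{\ovl x}=\sum_k x_k^*x_k$ and similarly for $y$. Substituting in (\ref{UIN1}) and relabeling $A_k:=x_k^*$, $B_k:=y_k$ reproduces the formula of \cite[Theorem 2.2]{BananaPAMS1998}.

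For the second identification, I would take $M=L^2(\Omega,A)$ with $A=B(H)$. Using $\skp xy=\int_\Omega x(t)^*y(t)\,\d\mu(t)$, $\ovl x(t)=x(t)^*$, and the left action $ay(t)=ay(t)$, the term $\skp x{ay}$ is exactly $\int x(t)^*ay(t)\,\d\mu(t)$, which is the i.p.t.i.\ transformer (\ref{InnerTT}). Normality of $x$ in the module sense translates, as remarked in the Example, to $x(t)$ being normal a.e.\ and the values commuting a.e., which in particular ensures $\skp xx=\skp{\ovl x}{\ovl x}=\int x(t)^*x(t)\,\d\mu(t)$. Plugging in gives precisely \cite[Theorem 3.2]{BananaJFA2005}.

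The whole proof is therefore a transcription, not a real argument, and the only point that requires a brief justification is that the two abstract conditions defining a normal element of a conjugated module (namely $\skp xx x=x\skp xx$ and $\skp xx=\skp{\ovl x}{\ovl x}$) are implied by the classical hypothesis used in \cite{BananaPAMS1998,BananaJFA2005}, i.e.\ that the components (resp.\ pointwise values) are mutually commuting normal operators. Since normality plus commutativity yields $x_k^*x_k=x_kx_k^*$ and makes the partial sums (resp.\ integral) commute with each $x_j$, both conditions follow, and no further obstacle appears.
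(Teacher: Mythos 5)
Your proposal is correct and matches the paper's treatment: the corollary is stated there without proof precisely because it is the direct substitution you carry out, and your verification that componentwise (resp.\ pointwise) normality and commutativity imply the two abstract normality conditions of the conjugated module is the only point that needed checking.
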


\begin{remark} The inequality (\ref{Refinement}) for $M=B(H)^n$ is proved in \cite{BananaStefan} using complicated identities and it plays an important role in this paper.
\end{remark}

Using three line theorem (which is a standard procedure), we can interpolate results of Theorem \ref{Prva} to $L^p(A,\tau)$ spaces.

\begin{theorem} Let $A$ be a semifinite von Neumann algebra, and let $M$ be a conjugated $W^*$-module over $A$. For all $p$, $q$, $r>1$ such that $1/q+1/r=2/p$, we have
\begin{equation}\label{InterP}
\|\skp x{ay}\|_p\le\left\|\skp{\skp xx^{q-1}\ovl x}{\ovl x}^{1/2q}a\skp{\skp yy^{r-1}\ovl y}{\ovl y}^{1/2r}\right\|_p.
\end{equation}
\end{theorem}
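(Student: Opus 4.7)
The plan is to apply Hadamard's three-line theorem to interpolate between the $L^1$ and $L^\infty$ estimates of Theorem \ref{Prva}, lifting them to $L^p$.

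Set $\xi := \langle x, x\rangle$, $\eta := \langle y, y\rangle$, and regularize by $\xi\to\xi+\varepsilon$, $\eta\to\eta+\varepsilon$ to ensure invertibility; $\varepsilon \to 0$ at the end. By the module axioms, the twisted elements $\tilde x := x\xi^{(q-1)/2}$, $\tilde y := y\eta^{(r-1)/2}$ satisfy $\langle\overline{\tilde x},\overline{\tilde x}\rangle = P_q$, $\langle\overline{\tilde y},\overline{\tilde y}\rangle = Q_r$, and $\langle \tilde x, a\tilde y\rangle = \xi^{(q-1)/2}\langle x, ay\rangle\eta^{(r-1)/2}$, where $P_q := \langle\xi^{q-1}\overline x,\overline x\rangle$ and $Q_r := \langle\eta^{r-1}\overline y,\overline y\rangle$. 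Applying (\ref{1infty}) to $\tilde x,\tilde y$ gives the key $L^1$ endpoint
$$\|\xi^{(q-1)/2}\langle x,ay\rangle\eta^{(r-1)/2}\|_1 \le \|P_q^{1/2}aQ_r^{1/2}\|_1. \qquad (\ast)$$

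Now consider the operator-valued analytic family $F(z) := \xi^{(q-1)z/2}\langle x,ay\rangle\eta^{(r-1)z/2}$ on the closed strip $\{0 \le \re(z) \le 1\}$. On $\re(z) = 0$ the factors $\xi^{it(q-1)/2}$, $\eta^{it(r-1)/2}$ are unitaries in $A$, so $\|F(it)\|_\infty = \|\langle x,ay\rangle\|_\infty \le \|x\|\|a\|\|y\|$ by Theorem \ref{Prva}; on $\re(z) = 1$ the same unitaries absorb, and $(\ast)$ gives $\|F(1+it)\|_1 \le \|P_q^{1/2}aQ_r^{1/2}\|_1$. Pair $F(z)$ with the analytic Stein family $h(z) := u|h|^{p'(1-z)}$ built from the polar decomposition $h = u|h|$ of a unit test element $h \in L^{p'}(A,\tau)$, where $1/p+1/p'=1$: then $h(1/p) = h$, $\|h(it)\|_1 = \|h\|_{p'}^{p'} = 1$, and $\|h(1+it)\|_\infty \le 1$. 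The scalar function $G(z) := \tau(h(z)F(z))$ is analytic on the strip and is bounded on each vertical side by Hölder against the corresponding endpoint. The three-line theorem applied at $\theta = 1/p$, together with cyclicity of the trace and the Hölder factorization $P_q^{1/2} = P_q^{1/(2q)}P_q^{(q-1)/(2q)}$ (similarly for $Q_r$), then yields $|\tau(h\langle x, ay\rangle)| \le \|P_q^{1/2q}aQ_r^{1/2r}\|_p$; taking supremum over $h$ and letting $\varepsilon\to 0$ finishes (\ref{InterP}).

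The main technical obstacle is the last step: one must check that the geometric mean $(\|x\|\|a\|\|y\|)^{1-1/p}\|P_q^{1/2}aQ_r^{1/2}\|_1^{1/p}$ produced by the three-line theorem collapses exactly to $\|P_q^{1/2q}aQ_r^{1/2r}\|_p$ after Hölder on the factorized weights. The hypothesis $1/q+1/r = 2/p$ is precisely what makes the exponents match: the portion $P_q^{(q-1)/(2q)}$ left over from the factorization combines with its counterpart on the $Q_r$ side to fit the remaining interpolation exponent $1-1/p = 1/p'$, while $P_q^{1/(2q)}$ and $Q_r^{1/(2r)}$ recover the interior target. This bookkeeping, though standard, is the delicate part; the normalization $\|h\|_{p'}=1$ also requires that the analytic family $h(z)$ be chosen so that $h(\theta) = h$ at exactly the same $\theta=1/p$ where $F(\theta)$ absorbs the remaining factors through cyclicity.
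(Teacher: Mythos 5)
There is a genuine gap, and it sits exactly where you flagged ``the main technical obstacle'': the collapse you hope for does not happen. Your analytic family $F(z)=\xi^{(q-1)z/2}\skp x{ay}\eta^{(r-1)z/2}$ has fixed $a$, so the three-line theorem between the $L^\infty$ endpoint and the $L^1$ endpoint $(\ast)$ can only give a \emph{geometric mean} of the two endpoint bounds, $\|F(1/p)\|_p\le(\|x\|\|a\|\|y\|)^{1-1/p}\|P_q^{1/2}aQ_r^{1/2}\|_1^{1/p}$. No H\"older factorization of $P_q^{1/2}$ and $Q_r^{1/2}$ turns this product into the single norm $\|P_q^{1/2q}aQ_r^{1/2r}\|_p$: the left factor involves $\|x\|\,\|y\|\,\|a\|$, which does not occur in the target at all and can strictly dominate it. (There is also a mismatch at the center: $F(1/p)=\xi^{(q-1)/2p}\skp x{ay}\eta^{(r-1)/2p}$, not $\skp x{ay}$, so even the left-hand side is not the one you want, and your pairing $\tau(h(z)F(z))$ evaluated at $z=1/p$ is not $\tau(h\skp x{ay})$.) The standard remedy, and what the paper actually does, is Stein-type interpolation in which the test operator is a \emph{free variable} $b$ and the weights are absorbed into the module elements: one considers $\skp{(\skp{\ovl u}{\ovl u}+\e)^{-\lambda/2}u(\skp uu+\e)^{(\lambda-1)/2}}{b\,(\skp{\ovl v}{\ovl v}+\e)^{-\mu/2}v(\skp vv+\e)^{(\mu-1)/2}}$, arranged so that \emph{every} boundary estimate reads $\|b\|$ in the corresponding norm; interpolation then yields $\|f(1/q,1/r)\|_p\le\|b\|_p$, a single $p$-norm, and the substitution $b=(\skp{\ovl u}{\ovl u}+\e)^{1/2q}a(\skp{\ovl v}{\ovl v}+\e)^{1/2r}$ produces the right-hand side of (\ref{InterP}).

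A second, independent obstruction: your scheme has one complex parameter, so both weights are forced to carry the same interpolation exponent $\theta=1/p$; at the center you get powers $(q-1)/2p$ and $(r-1)/2p$, which match the target exponents $1/2q$ and $1/2r$ only when $q=r=p$. The asymmetric case $q\ne r$ with $1/q+1/r=2/p$ genuinely requires a two-parameter family on the bi-strip $0\le\re\lambda,\re\mu\le1$, with four corner estimates: operator norm at $(0,0)$, trace norm at $(1,1)$, and the Hilbert--Schmidt bounds (\ref{C2}) at the mixed corners $(0,1)$ and $(1,0)$. Those $\|\cdot\|_2$ estimates are not decorative; they are what lets one first interpolate in $\mu$ to reach the exponent $1/r$ and then in $\lambda$ to reach $1/q$, producing exactly the constraint $1/q+1/r=2/p$. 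Your proposal never invokes (\ref{C2}) and therefore cannot see where this constraint comes from.
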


\begin{proof} Let $u$, $v\in M_0$ and let $b\in A$. For $0\le\re\lambda,\re\mu\le1$ consider the function
$$f(\lambda,\mu)=\skp{(\skp{\ovl u}{\ovl u}+\e)^{-\frac\lambda2}u(\skp uu+\e)^{\frac{\lambda-1}2}}{b(\skp{\ovl v}{\ovl v}+\e)^{-\frac\mu2}v(\skp vv+\e)^{\frac{\mu-1}2}}.$$
This is an analytic function (obviously).

On the boundaries of the strips, we estimate. For $\re\lambda=\re\mu=0$
$$f(it,is)=\skp{(\skp{\ovl u}{\ovl u}+\e)^{-\frac{it}2}u(\skp uu+\e)^{-\frac12+\frac{it}2}}{b(\skp{\ovl v}{\ovl v}+\e)^{-\frac{is}2}v(\skp vv+\e)^{-\frac12+\frac{is}2}}.$$
Since $(\skp{\ovl u}{\ovl u}+\e)^{-it/2}$, $(\skp uu+\e)^{it/2}$, $(\skp{\ovl v}{\ovl v}+\e)^{-is/2}$ and $(\skp vv+\e)^{is/2}$ are unitary operators, and since the norm of $u(\skp uu+\e)^{-1/2}$, $v(\skp vv+\e)^{-1/2}$ does not exceed $1$, by (\ref{1infty}) we have
\begin{equation}\label{00}
\|f(it,is)\|\le\|b\|.
\end{equation}

For $\re\lambda=\re\mu=1$
$$f(1+it,1+is)=\skp{(\skp{\ovl u}{\ovl u}+\e)^{-\frac12-\frac{it}2}u(\skp uu+\e)^{\frac{it}2}}{b(\skp{\ovl v}{\ovl v}+\e)^{-\frac12-\frac{is}2}v(\skp vv+\e)^{\frac{is}2}}.$$
By a similar argument, by (\ref{1infty}) we obtain
\begin{equation}\label{11}
\|f(1+it,1+is)\|_1\le\|b\|_1.
\end{equation}

For $\re\lambda=0$, $\re\mu=1$, by (\ref{C2}) we have
$$f(it,1+is)=\skp{(\skp{\ovl u}{\ovl u}+\e)^{-\frac{it}2}u(\skp uu+\e)^{-\frac12+\frac{it}2}}{b(\skp{\ovl v}{\ovl v}+\e)^{-\frac12-\frac{is}2}v(\skp vv+\e)^{\frac{is}2}},$$
and hence
\begin{equation}\label{01}
\|f(it,1+is)\|_2\le\|b\|_2.
\end{equation}
Similarly
\begin{equation}\label{10}
\|f(1+it,is)\|_2\le\|b\|_2.
\end{equation}

Let us interpolate between (\ref{00}) and (\ref{01}). Let $r>1$. Then $1/r=\theta\cdot1+(1-\theta)\cdot0$ for $\theta=1/r$. Then $(\theta\cdot(1/2)+(1-\theta)\cdot0)^{-1}=2r$ and hence, by three line theorem (see \cite{KosakiJFA1984} and \cite{Kosaki}) we obtain
\begin{equation}\label{0r}
\|f(0+it,1/r)\|_{2r}\le\|b\|_{2r}
\end{equation}

Similarly, interpolating between (\ref{10}) and (\ref{11}) we get
\begin{equation}\label{1r}
\|f(1+it,1/r)\|_{\frac{2r}{r+1}}\le\|b\|_{\frac{2r}{r+1}}
\end{equation}
since $1/r=\theta\cdot1+(1-\theta)\cdot0$ for same $\theta=1/r$ and $(\theta\cdot1+(1-\theta)\cdot(1/2))^{-1}=2r/(r+1)$

Finally, interpolate between (\ref{0r}) and (\ref{1r}). Then $1/q=\theta\cdot1+(1-\theta)\cdot0$ for $\theta=1/q$ and therefore $(\theta\cdot\frac{r+1}{2r}+(1-\theta)\frac1{2r})^{-1}=\frac1{2qr}\cdot(r+1+q-1)=p$. Thus
$$\|f(1/q,1/r)\|_p\le\|b\|_p.$$
i.e.
$$\left\|\skp{(\skp{\ovl u}{\ovl u}+\e)^{-\frac1{2q}}u(\skp uu+\e)^{\frac{1-q}{2q}}}{b(\skp{\ovl v}{\ovl v}+\e)^{-\frac1{2r}}v(\skp vv+\e)^{\frac{1-q}{2q}}}\right\|_p\le\|b\|_p.$$
After substitutions
$$u=x\skp xx^{(q-1)/2},\quad v=\skp yy^{(r-1)/2},\quad b=(\skp{\ovl u}{\ovl u}+\e)^{1/2q}a(\skp{\ovl v}{\ovl v}+\e)^{1/2r},$$
we obtain
\begin{multline*}\left\|\skp{x\skp xx^{(q-1)/2}(\skp xx^q+\e)^{(1-q)/2q}}{ay\skp yy^{(q-1)/2}(\skp yy^q+\e)^{(1-q)/2q}}\right\|_p\le\\\le\left\|\left(\skp{\skp xx^{q-1}\ovl x}{\ovl x}+\e\right)^{1/2q}a\left(\skp{\skp yy^{r-1}\ovl y}{\ovl y}+\e\right)^{1/2r}\right\|_p
\end{multline*}
which after $\e\to0$ yields (\ref{InterP}), using the argument similar to that in \cite[Lemma 1.3.9]{Manuilov}.
\end{proof}

\begin{remark} In a special case $A=B(H)$, $\tau=\tr$, $M=l^2(A)'$, $r=q=p$ formula (\ref{InterP}) becomes \cite[Theorem 2.1]{BananaJLMS1999} (the main result).

Also, for $A=B(H)$, $\tau=\tr$, $M=L^2(\Omega,A)$ formula (\ref{InterP}) becomes \cite[Theorem 3.3]{BananaJFA2005} (the first displayed formula from the abstract), there proved with an additional assumption that $\Omega$ is $\sigma$-finite.
\end{remark}

In the next two section we derive some inequalities that regularly arise from Cauchy-Schwartz inequality.

\section{Inequalities of the type $|1-\skp xy|\ge(1-\|x\|^2)^{1/2}(1-\|y\|^2)^{1/2}$}

The basic inequality can be proved as
\begin{align*}
|(1-\skp xy)^{-1}|&\le\sum_{n=0}^{+\infty}|\skp xy|^n\le\sum_{n=0}^{+\infty}\|x\|^n\|y\|^n\le\\
    &\left(\sum_{n=0}^{+\infty}\|x\|^{2n}\right)^{1/2}\left(\sum_{n=0}^{+\infty}\|y\|^{2n}\right)^{1/2}=
    (1-\|x\|^2)^{-1/2}(1-\|y\|^2)^{-1/2}.
\end{align*}

Following this method we prove:

\begin{theorem}\label{Naopaka} Let $M$ be a conjugated $W^*$-module over $A=B(H)$, let $x$, $y\in M_0$ be normal, and let $\skp xx$, $\skp yy\le1$. Then
$$\luin(1-\skp xx)^{1/2}a(1-\skp yy)^{1/2}\ruin\le\luin a-\skp x{ay}\ruin$$
in any unitarily invariant norm.
\end{theorem}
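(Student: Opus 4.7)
The plan is to repeat the scalar Neumann-series proof of $|1-\skp xy|\ge(1-\|x\|^2)^{1/2}(1-\|y\|^2)^{1/2}$ inside the full Fock module $F$ of Example \ref{Fock}, and then to feed the resulting operator identity into the $C^*$-module Cauchy--Schwartz inequality (\ref{UIN1}).

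First I would reduce to the strict case $\skp xx,\skp yy<1$; the boundary case $\skp xx,\skp yy\le 1$ follows by replacing $x,y$ with $tx,ty$ for $t<1$ and letting $t\to 1^-$, invoking continuous functional calculus for the square roots. Under this strict assumption set $\xi=\sum_{n=0}^{+\infty}x^{\otimes n}$ and $\eta=\sum_{n=0}^{+\infty}y^{\otimes n}$ in $F$. By Example \ref{Fock} these elements are normal, and (\ref{SkpTensor}) gives $\skp\xi\xi=(1-\skp xx)^{-1}$, $\skp\eta\eta=(1-\skp yy)^{-1}$. Writing $B=(1-\skp xx)^{1/2}$ and $C=(1-\skp yy)^{1/2}$, we have the collapsing identities $\skp\xi\xi^{1/2}B=1$ and $C\skp\eta\eta^{1/2}=1$, since each is a product of two commuting positive elements obtained by continuous functional calculus from a single operator.

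The core of the argument is the identity $\skp\xi{c\eta}=(I-T)^{-1}c$ for every $c\in A$, where $Tc:=\skp x{cy}$. The cross terms $\skp{x^{\otimes n}}{c\,y^{\otimes m}}$ with $n\ne m$ vanish because the two entries sit in orthogonal summands of $F$, and a short induction on $n$ using (\ref{InteriorInner}) shows $\skp{x^{\otimes n}}{c\,y^{\otimes n}}=T^n c$; the bound $\|T\|\le\|x\|\|y\|<1$ secures convergence. Furthermore, normality of $x$ makes $B$ (a function of $\skp xx$) commute past every factor of $x$ inside the Fock module, and similarly for $C$ and $y$; this commutation propagates through the induction to give $T^n(BbC)=BT^n(b)C$, and hence
\[
\skp\xi{BbC\eta}=B(I-T)^{-1}b\,C,\qquad b\in A.
\]

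Applying (\ref{UIN1}) to the normal elements $\xi,\eta$ with the central operator $BbC$, the right hand side collapses to $\luin b\ruin$ thanks to $\skp\xi\xi^{1/2}B=1=C\skp\eta\eta^{1/2}$, while the left hand side is $\luin B(I-T)^{-1}b\,C\ruin$. Since $I-T$ is invertible, every $a\in A$ has the form $(I-T)^{-1}b$ with $b=a-\skp x{ay}$, and the resulting estimate is exactly $\luin BaC\ruin\le\luin a-\skp x{ay}\ruin$. The two delicate points will be (a)~checking carefully that $B$ and $C$ commute past the tensor factors of $\xi$ and $\eta$, which rests entirely on the normality relations $\skp xx\,x=x\skp xx$ and $\skp yy\,y=y\skp yy$ from the definition of normal module element, and (b)~the continuity argument needed to pass from the strict case $\skp xx,\skp yy<1$ to the boundary case permitted by the hypotheses.
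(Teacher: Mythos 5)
Your proposal is correct and follows essentially the same route as the paper: the Fock module $F$, the identity $T^k a=\skp{x^{\otimes k}}{ay^{\otimes k}}$, the geometric series (\ref{SkpTensor}), the Cauchy--Schwartz inequality (\ref{UIN1}) for the normal elements $\sum x^{\otimes n}$, $\sum y^{\otimes n}$, the commutation of $(1-\skp xx)^{\pm1/2}$ and $(1-\skp yy)^{\pm1/2}$ past $T$ via normality, and the limiting argument for the boundary case. The only cosmetic difference is that you absorb the factors $B$, $C$ into the central operator before applying (\ref{UIN1}), whereas the paper first proves the inverse inequality (\ref{I-T}) and then substitutes; these are the same computation.
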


\begin{proof} We use examples \ref{Interior} and \ref{Fock}.

Denote $Ta=\skp x{ay}$. We have $T^2a=\skp x{\skp x{ay}y}=\skp{x\otimes x}{a y\otimes y}$ and by induction $T^ka=\skp{x^{\otimes k}}{a y^{\otimes k}}$. Suppose $\|x\|$, $\|y\|\le\delta<1$. Then $\|x^{\otimes k}\|$, $\|y^{\otimes k}\|\le\delta^k$ and hence $\luin T^k\ruin\le\delta^{2k}$. Then
\begin{equation}\label{Series}
(I-T)^{-1}=\sum_{n=0}^{+\infty}T^k.
\end{equation}
Put $b=(I-T)^{-1}a$. Then
\begin{align}\label{Tenzor}
\luin b\ruin&=\luin\sum_{k=0}^{+\infty}T^ka\ruin=\luin\sum_{k=0}^{+\infty}\skp{x^{\otimes k}}{ay^{\otimes k}}\ruin=\luin\skp{\sum_{k=0}^{+\infty}x^{\otimes k}}{a\sum_{k=0}^{+\infty}y^{\otimes k}}\ruin\le\\
    &\le\luin\skp{\sum_{k=0}^{+\infty}x^{\otimes k}}{\sum_{k=0}^{+\infty}x^{\otimes k}}^{1/2}a
    \skp{\sum_{k=0}^{+\infty}y^{\otimes k}}{\sum_{k=0}^{+\infty}y^{\otimes k}}^{1/2}\ruin.\nonumber
\end{align}
by (\ref{UIN1}) and normality of $x$ and $y$. Invoking (\ref{SkpTensor}), inequality (\ref{Tenzor}) becomes
\begin{equation}\label{I-T}
\luin(I-T)^{-1}a\ruin\le\luin(1-\skp xx)^{-1/2}a(1-\skp yy)^{-1/2}\ruin.
\end{equation}
Finally, note that the mappings $I-T$ and $a\mapsto(1-\skp xx)^{-1/2}a(1-\skp yy)^{-1/2}$ commute (by normality of $x$ and $y$) and put $(1-\skp xx)^{-1/2}(a-Ta)(1-\skp yy)^{-1/2}$ in place of $a$, to obtain the conclusion.

If $\|x\|$, $\|y\|=1$ then put $\delta x$ instead of $x$ and let $\delta\to1-$.
\end{proof}

\begin{remark} If $M=L^2(\Omega,A)$ this is \cite[Theorem 4.1]{BananaJFA2005} (the last formula from the abstract). If $M=B(H)\times B(H)$, $x=(I,A)$, $y=(I,B)$ then it is \cite[Theorem 2.3]{BananaPAMS1998} (the last formula from the abstract).
\end{remark}

\begin{remark} Instead of $t\mapsto 1-t$ we may consider any other function $f$ such that $1/f$ is well defined on some $[0,c)$ and has Taylor expansion with positive coefficients, say $c_n$. Then distribute $\sqrt{c_n}$ on both arguments in inner product in (\ref{Tenzor}) and after few steps we get
$$\luin (f(x^*x))^{1/2}a(f(y^*y))^{1/2}\ruin\le\luin f(T)\ruin.$$

For instance, for $t\mapsto(1-t)^\a$, $\a>0$ we have $(1-t)^{-\a}=\sum c_nt^n$, where $c_n=\Gamma(n+\a)/(\Gamma(\a)n!)>0$ and we get
\begin{equation}\label{AOTalpha}
\luin(1-\skp xx)^{\a/2}a(1-\skp yy)^{\a/2}\ruin\le\luin(I-T)^\a a\ruin
\end{equation}
in any unitarily invariant norm. For $M=A=B(H)$, (\ref{AOTalpha}) reduces to
$$\luin(1-x^*x)^{\a/2}a(1-y^*y)^{\a/2}\ruin\le\luin\sum_{n=0}^{+\infty}(-1)^n\binom anx^{*n}ay^n\ruin,$$
which is the main result of \cite{StefanAOT2016}. Varying $f$, we may obtain many similar inequalities.
\end{remark}

Finally, if normality condition on $x$ and $y$ is dropped, we can use (\ref{InterP}) to obtain some inequalities in $L^p(A;\tau)$ spaces.

\begin{theorem}Let $M$ be a conjugated $W^*$-module over a semifinite von Neumann algebra $A$, let $x$, $y\in M_0$, $\|x\|$, $\|y\|<1$ and let
\begin{equation}\label{Defekt}
\Delta_z=\skp{\sum_{n=0}^{+\infty}z^{\otimes n}}{\sum_{n=0}^{+\infty}z^{\otimes n}}^{-1/2},\quad\mbox{for}~z\in\{x,y,\ovl x,\ovl y\}.
\end{equation}
Then
$$\|\Delta_x^{1-1/q}a\Delta_y^{1-1/r}\|_p\le\|\Delta_{\ovl x}^{-1/q}(a-\skp x{ay})\Delta_{\ovl y}^{-1/r}\|_p,$$
for all $p$, $q$, $r>1$ such that $1/q+1/r=2/p$.
\end{theorem}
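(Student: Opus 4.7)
The plan is to emulate the proof of Theorem~\ref{Naopaka}, replacing the use of (\ref{UIN1}) by the three-line-theorem argument that produced (\ref{InterP}) and applying it to the Fock-module vectors $X=\sum_{n=0}^{+\infty}x^{\otimes n}$ and $Y=\sum_{n=0}^{+\infty}y^{\otimes n}$ from Example~\ref{Fock} in the role of $x$, $y$. Since $\|x\|,\|y\|<1$, both series converge in $F$ and, exactly as in Theorem~\ref{Naopaka}, one has $(I-T)^{-1}a=\skp{X}{aY}$, where $Ta=\skp{x}{ay}$ (cross terms vanish because different Fock summands are orthogonal). Moreover, orthogonality gives $\skp{X}{X}=\Delta_x^{-2}$, $\skp{\ovl X}{\ovl X}=\Delta_{\ovl x}^{-2}$, and analogously for $Y$.

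The central step is to rerun the three-line-theorem argument from the proof of (\ref{InterP}) with $u=X$, $v=Y$, but to stop at the intermediate inequality $\|f(1/q,1/r)\|_p\le\|b\|_p$ obtained there \emph{before} the substitution $u=x\skp{x}{x}^{(q-1)/2}$ is performed. The four corner estimates (\ref{00})--(\ref{11}) transfer verbatim: since $\skp{X}{X}$, $\skp{\ovl X}{\ovl X}$, $\skp{Y}{Y}$, $\skp{\ovl Y}{\ovl Y}$ are functions of $\Delta_x$, $\Delta_{\ovl x}$, $\Delta_y$, $\Delta_{\ovl y}$ respectively, the factors $(\Delta_z^{-2}+\e)^{\pm it/2}$ are unitary in $A$ and, e.g., $\skp{X(\Delta_x^{-2}+\e)^{-1/2}}{X(\Delta_x^{-2}+\e)^{-1/2}}=\Delta_x^{-2}(\Delta_x^{-2}+\e)^{-1}\le I$ by continuous functional calculus; the $\ovl X$-side estimate is the same. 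Interpolation across the four corners then yields $\|f(1/q,1/r)\|_p\le\|b\|_p$.

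It remains to identify $f(1/q,1/r)$. Using $(\Delta_z^{-2})^{\alpha/2}=\Delta_z^{-\alpha}$ together with the module rules $\skp{\alpha z}{w}=\skp{z}{\alpha^*w}$ and $\skp{zA}{wB}=A^*\skp{z}{w}B$, one obtains in the limit $\e\to 0$
$$f(1/q,1/r)=\Delta_x^{1-1/q}\skp{X}{\Delta_{\ovl x}^{1/q}b\Delta_{\ovl y}^{1/r}Y}\Delta_y^{1-1/r}=\Delta_x^{1-1/q}(I-T)^{-1}(\Delta_{\ovl x}^{1/q}b\Delta_{\ovl y}^{1/r})\Delta_y^{1-1/r}.$$
Substituting $b=\Delta_{\ovl x}^{-1/q}(a-\skp{x}{ay})\Delta_{\ovl y}^{-1/r}$ turns $\Delta_{\ovl x}^{1/q}b\Delta_{\ovl y}^{1/r}$ into $(I-T)a$, so $(I-T)^{-1}$ cancels and the claim follows. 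The main obstacle is bookkeeping the $\e\to0$ passage (operator monotonicity and $\|(\Delta_z^{-2}+\e)^{1/2}-\Delta_z^{-1}\|\le\e^{1/2}$-type estimates), but this is handled exactly as in the proof of (\ref{InterP}) via the Manuilov reference cited there.
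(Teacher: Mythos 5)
Your proposal is correct and follows essentially the same route as the paper: both pass to the full Fock module, use $(I-T)^{-1}b=\skp{\sum x^{\otimes n}}{b\sum y^{\otimes n}}$, and invoke the $L^p$ interpolation inequality. The only (cosmetic) difference is that the paper applies the packaged inequality (\ref{InterP}) to the shifted vectors $X\Delta_x^{1-1/q}$, $Y\Delta_y^{1-1/r}$ and then computes the resulting coefficients to be $\Delta_{\ovl x}^{-1/q}$, $\Delta_{\ovl y}^{-1/r}$, whereas you rerun the three-line argument with $X$, $Y$ themselves and read off $f(1/q,1/r)$ directly; both identifications amount to the same module-algebra bookkeeping.
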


\begin{proof} Let $b=(I-T)a$. We have $a=(I-T)^{-1}b$ and hence
\begin{align*}\|\Delta_x^{1-1/q}a\Delta_y^{1-1/r}\|_p&=\Big\|\Delta_x^{1-1/q}\sum_{n=0}^{+\infty}\skp{x^{\otimes n}}{by^{\otimes n}}\Delta_y^{1-1/r}\Big\|_p=\\
&=\Big\|\sum_{n=0}^{+\infty}\skp{x^{\otimes n}\Delta_x^{1-1/q}}{by^{\otimes n}\Delta_y^{1-1/r}}\Big\|_p\le\|ubv\|_p,
\end{align*}
by (\ref{InterP}), where
$$u=\sum_{n=0}^{+\infty}\skp{\sum_{n=0}^{+\infty}\skp{x^{\otimes n}\Delta_x^{1-1/q}}{x^{\otimes n}\Delta_x^{1-1/q}}^{q-1}\Delta_x^{1-1/q}\ovl x^{\otimes n}}{\Delta_x^{1-1/q}\ovl x^{\otimes n}}^{1/2q}.$$
After a straightforward calculation, we obtain $u=\Delta_{\ovl x}^{-1/q}$ and similarly $v=\Delta_{\ovl y}^{-1/r}$ and the conclusion follows.
\end{proof}

\begin{remark} When $A=B(H)$, $\tau=\tr$, this is the main result of \cite{LudaciFilomat}, from which we adapted the proof for our purpose. However, unaware of Fock module technique, the authors of \cite{LudaciFilomat} produced significantly more robust formulae.

Also, in \cite{LudaciFilomat}, the assumptions are relaxed to $r(T_{x,x})$, $r(T_{y,y})\le1$, where $r$ stands for the spectral radius and $T_{x,y}(a)=\skp x{ay}$. This easily implies $r(T_{x,y})\le1$. First, it is easy to see that $\|T_{z,z}\|=\|z\|^2$. Indeed, by (\ref{1infty}) we have $\|T_{z,z}\|\le\|z\|^2$. On the other hand, choosing $a=1$ we obtain $\|T_{z,z}\|\ge\|T_{z,z}(1)\|=\|\skp zz\|=\|z\|^2$. Again, by (\ref{1infty}), we have $\|T_{x,y}\|\le\|x\|\|y\|=\sqrt{\|T_{x,x}\|\|T_{y,y}\|}$. Apply this to $x^{\otimes n}$ and $y^{\otimes n}$ instead of $x$ and $y$ and we get $\|T^n_{x,y}\|\le\sqrt{\|T_{x,x}^n\|\|T_{y,y}^n\|}$ from which we easily conclude $r(T_{x,y})^2\le r(T_{x,x})r(T_{y,y})$ by virtue of spectral radius formula. (In a similar way, we can conclude $r(T_{x,x})=\|x\|$ for normal $x$.)

Thus, if both $r(T_{x,x})$, $r(T_{y,y})<1$, the series in (\ref{Defekt}) converge. If some of $r(T_{x,x})$, $r(T_{y,y})=1$ then define $\Delta_x=\lim_{\delta\to0}\Delta_{\delta x}=\inf_{0<\delta<1}\Delta_{\delta x}$, etc, and the result follows, provided that series that defines $\Delta_{\ovl x}$ and $\Delta_{\ovl y}$ are weakly convergent.
\end{remark}

\section{Gr\"uss type inequalities}

For classical Gr\"uss inequality, see \cite[\S2.13]{Mitrin}. We give a generalization to Hilbert modules following very simple approach from \cite{Dragomir} in the case of Hilbert spaces.

\begin{theorem} Let $M$ be a conjugated $W^*$-module over $B(H)$, and let $e\in M$ be such that $\skp ee=1$. The mapping $\Phi:M\times M\to B(H)$, $\Phi(x,y)=\skp xy-\skp xe\skp ey$. is a semi-inner product.

If, moreover $x$, $y\in M_0$ are normal with respect to $\Phi$. Then
\begin{equation}\label{Gruss3}
\luin\skp x{ay}-\skp xe\skp e{ay}\ruin\le\luin(\skp xx-|\skp xe|^2)^{1/2}a(\skp yy-|\skp ye|^2)^{1/2}\ruin
\end{equation}
in any unitarily invariant norm.

Finally, if $x$, $y$ belongs to balls with diameters $[me,Me]$ and $[pe,Pe]$ ($m$, $M$, $p$, $P\in\R$), respectively, then
\begin{equation}\label{GrussMm}
\luin\skp x{ay}-\skp xe\skp e{ay}\ruin\le\frac14\luin a\ruin|M-m||P-p|.
\end{equation}

(Here, $x$ belongs to the ball with diameter $[y,z]$ iff $\|x-\frac{y+z}2\|\le\|\frac{z-y}2\|$.)
\end{theorem}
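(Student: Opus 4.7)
My plan is to proceed in three stages, one for each of the three assertions.

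For the semi-inner product claim, I introduce the $A$-linear map $Pz:=e\skp ez$ on $M$. Using $\skp ee=1$, one verifies $P^2=P$ and $\skp{Px}{y}=\skp xe\skp ey=\skp x{Py}$, so that $I-P$ is a self-adjoint $A$-linear projection; a direct expansion then gives
\[\Phi(x,y)=\skp{(I-P)x}{(I-P)y},\]
exhibiting $\Phi$ as the pull-back of $\skp\cdot\cdot$ along $I-P$, hence an $A$-valued semi-inner product.

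For (\ref{Gruss3}), I apply Theorem~\ref{Prva}, formula (\ref{UIN1}), with $\Phi$ replacing $\skp\cdot\cdot$: its proof uses only the Cauchy-Schwartz inequality (\ref{CS}) -- which the preliminaries already extend to semi-inner products -- together with the modular conjugation axiom and normality of $x,y$, all of which are imported under the standing hypothesis ``$x,y$ normal with respect to $\Phi$.'' The resulting inequality
\[\luin\Phi(x,ay)\ruin\le\luin\Phi(x,x)^{1/2}\,a\,\Phi(y,y)^{1/2}\ruin\]
expands to (\ref{Gruss3}) via $\Phi(x,ay)=\skp x{ay}-\skp xe\skp e{ay}$ and $\Phi(z,z)=\skp zz-|\skp ze|^2$.

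For (\ref{GrussMm}), I use $\Phi(e,\cdot)\equiv 0$, whence $\Phi(x-\alpha e,y-\beta e)=\Phi(x,y)$ for any scalars $\alpha,\beta$. Taking $\alpha=(m+M)/2$ and $\beta=(p+P)/2$, the ball hypothesis gives $\|x-\alpha e\|\le(M-m)/2$ and $\|y-\beta e\|\le(P-p)/2$; combining with the orthogonal-projection inequality $\Phi(z,z)=\skp{(I-P)z}{(I-P)z}\le\skp zz$ yields $\|\Phi(x,x)^{1/2}\|\le(M-m)/2$ and symmetrically for $y$. The conclusion then follows from (\ref{Gruss3}) and the standard bound $\luin BaC\ruin\le\|B\|\,\|C\|\,\luin a\ruin$ for unitarily invariant norms. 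The main obstacle is justifying that the machinery of Theorem~\ref{Prva} transfers wholesale to the semi-inner product $\Phi$: concretely, the modular conjugation compatibility $\tau(\Phi(\ovl y,\ovl x))=\tau(\Phi(x,y))$ requires a condition on $\ovl e$ which is implicit in the normality hypothesis, and once granted, the remaining computations are routine.
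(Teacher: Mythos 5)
Your proposal is correct and follows essentially the same route as the paper: positivity of $\Phi$, then (\ref{Gruss3}) by applying (\ref{UIN1}) to the semi-inner product $\Phi$, then the translation $\Phi(x,x)=\Phi(x-ce,x-ce)\le\skp{x-ce}{x-ce}$ with $c=(M+m)/2$ to get (\ref{GrussMm}). The only cosmetic difference is that you exhibit $\Phi(x,y)=\skp{(I-P)x}{(I-P)y}$ via the projection $Pz=e\skp ez$, whereas the paper deduces $\Phi(x,x)\ge0$ directly from the Cauchy--Schwartz inequality $|\skp ex|^2\le\skp xx$; your flagged concern about transferring the modular-conjugation axiom to $\Phi$ is legitimate but is treated no more explicitly in the paper than in your write-up.
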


\begin{proof} The mapping $\Phi$ is obviously linear in $y$ and conjugate linear in $x$. Moreover, by inequality (\ref{CS})
$$\skp xe\skp ex=|\skp ex|^2\le\|e\|^2\skp xx=\skp xx,$$
i.e.\ $\Phi(x,x)\ge0$. Hence $\Phi$ is an $A$-valued (semi)inner product. 

If $x$, $y$ are normal, then, by (\ref{UIN1}) we obtain
\begin{equation}\label{GrussPhi}
\luin\Phi(x,ay)\ruin\le\luin\Phi(x,x)^{1/2}a\Phi(y,y)^{1/2}\ruin
\end{equation}
in any unitarily invariant norm. Write down exact form of $\Phi$ and we obtain (\ref{Gruss3}).

Finally, for the last conclusion, note that $\Phi(x,x)=\Phi(x-ec,x-ec)$ for any $c\in\C$ (direct verification), and hence $\Phi(x,x)\le\skp{x-ec}{x-ec}$, which implies $\|\Phi(x,x)^{1/2}\|\le\|x-ec\|$. Choosing $c=(M+m)/2$, we obtain $\|\Phi(x,x)^{1/2}\|\le(M-m)/2$. Similarly, $\|\Phi(y,y)^{1/2}\|\le(P-p)/2$. Thus (\ref{GrussPhi}) implies (\ref{GrussMm}).
\end{proof}

\begin{remark}Choose $M=L^2(\Omega,\mu)$, $\mu(\Omega)=1$ and choose $e$ to be the function identically equal to $1$. Then
$$\Phi(x,ay)=\int_\Omega x(t)^*ay(t)\d\mu(t)-\int_\Omega x(t)^*\d\mu(t)\int_\Omega ay(t)\d\mu(t),$$
and from (\ref{Gruss3}) and (\ref{GrussMm}) we obtain main results of \cite{BananaGeorg}.
\end{remark}

\begin{remark} Applying other inequalities from section 3, we can derive other results from \cite{BananaGeorg}. Also, applying inequality $|\skp x{ay}|^2\le\|x\|^2\skp{ay}{ay}$ to the mapping $\Phi$ instead of $\skp\cdot\cdot$ we obtain the key result of \cite{Ludaci5autora}, there proved by complicated identities.
\end{remark}

\section{Concluding remarks}

Both, elementary operators and i.p.t.i.\ transformers on $B(H)$ are special case of
\begin{equation}\label{ElOpSkp}
\skp x{Ty}
\end{equation}
where $x$, $y$ are vectors from some Hilbert $W^*$-module $M$ over $B(H)$ and $T:M\to M$ is given by left action of $B(H)$.

Although there are many results independent of the representation (\ref{ElOpSkp}), a lot of inequalities related to elementary operators and i.p.t.i.\ transformers can be reduced to elementary properties of the $B(H)$-valued inner product.

\bibliographystyle{abbrv}
\bibliography{CauchySch}

\begin{thebibliography}{10}

\bibitem{Dragomir}
S.~S. Dragomir.
\newblock Generalization of {G}r\"uss's inequality in inner product spaces and
  applications.
\newblock {\em J Math. Anal. Appl.}, 237(1):74--82, 1999.

\bibitem{Kosaki}
T.~Fack and H.~Kosaki.
\newblock Generalized $s$-numbers of $\tau$-measurable operators.
\newblock {\em Pacific J Math.}, 123(2):269--300, 1986.

\bibitem{GKrein}
I.~C. Gohberg and M.~G. Kre\u{\i}n.
\newblock {\em Itroduction to the theory of linear nonselfadjoint operators},
  volume 018 of {\em Translations of Mathematical Monographs}.
\newblock American Mathematical Society, Providence RI, 1st edition, 1969.

\bibitem{BananaPAMS1998}
D.~R. Joci\'c.
\newblock {C}auchy-{S}chwarz and means inequalities for elementary operators
  into norm ideals.
\newblock {\em Proc. Amer. Math. Soc.}, 126(9):2705--2711, 1998.

\bibitem{BananaJLMS1999}
D.~R. Joci\'c.
\newblock The {C}auchy-{S}chwarz norm inequality for elementary operators in
  {S}chatten ideals.
\newblock {\em J. London Math. Soc.}, 60(3):925--934, 1999.

\bibitem{BananaJFA2005}
D.~R. Joci\'c.
\newblock {C}auchy-{S}chwarz norm inequalities for weak${}^*$-integrals of
  operator valued functions.
\newblock {\em J. Funct. Anal.}, 218(2):318--346, 2005.

\bibitem{Ludaci5autora}
D.~R. Joci\'c, {\DJ}.~Krtini\'c, M.~Lazarevi\'c, P.~Melentijevi\'c, and
  S.~Milo\v{s}evi\'c.
\newblock Refinements of inequalities related to {L}andau-{G}r\"uss
  inequalities for elementary operators acting on ideals associated to
  $p$-modified unitarily invariant norms.
\newblock {\em Complex Anal. Oper. Theory}, 12(1):195--205, 2018.

\bibitem{BananaGeorg}
D.~R. Joci\'c, {\DJ}.~Krtini\'c, and M.~S. Moslehian.
\newblock {L}andau and {G}r\"uss type inequalities for inner product type
  integral transformers in norm ideals.
\newblock {\em Math. Inequal. Appl.}, 16(1):109--125, 2013.

\bibitem{BananaStefan}
D.~R. Joci\'c and S.~Milo\v{s}evi\'c.
\newblock Refinements of operator {C}auchy-{S}chwarz and {M}inkowski
  inequalities for $p$-modified norms and related norm inequalities.
\newblock {\em Linear Algebra Appl.}, 488(1):284--301, 2016.

\bibitem{LudaciFilomat}
D.~R. Joci\'c, S.~Milo\v{s}evi\'c, and V.~{\DJ}uri\'c.
\newblock Norm inequalities for elementary operators and other inner product
  type integral transformers with the spectra contained in the unit disc.
\newblock {\em Filomat}, 31(2):197--206, 2017.

\bibitem{KosakiJFA1984}
H.~Kosaki.
\newblock Applications of the complex interpolation method to a von {N}eumann
  algebra (non-commutative ${L}^p$-spaces).
\newblock {\em J. Funct. Anal.}, 56(1):29--78, 1984.

\bibitem{Lance}
E.~C. Lance.
\newblock {\em {H}ilbert ${C}^*$-Modules: A toolkit for operator algebraists}.
\newblock Cambridge University Press, 1995.

\bibitem{LumerRosenblum}
G.~Lumer and M.~Rosenblum.
\newblock Linear operator equations.
\newblock {\em Proc. Amer. Math. Soc.}, 10(1):32--41, 1959.

\bibitem{Manuilov}
V.~M. Manuilov and E.~V. Troitsky.
\newblock {\em {H}ilbert ${C}^*$-modules}, volume 226 of {\em Translations of
  mathematical monographs}.
\newblock AMS, Providence, {R}hode {I}sland, 2005.

\bibitem{StefanAOT2016}
S.~Milo\v{s}evi\'c.
\newblock Norm inequalities for elementary operators related to contractions
  and operators with spectra contained in the unit disk in norm ideals.
\newblock {\em Adv. Oper. Theory}, 1(2):147--159, 2016.

\bibitem{Mitrin}
D.~S. Mitrinovi\'c.
\newblock {\em Analytic inequalities}, volume 165 of {\em Die Grundlehren der
  Matematischen Wissenschaften in Einzeldarstellungen Band}.
\newblock Springer, 1970.

\bibitem{Tomita}
M.~Takesaki.
\newblock {\em {T}omita's theory of modular {H}ilbert algebras and its
  applications}, volume 128 of {\em Lecture Notes Math.}
\newblock Springer, 1970.

\bibitem{Takesaki}
M.~Takesaki.
\newblock {\em Theory of Operator Algebras ${I}$}, volume 124 of {\em
  Encyclopaedea of Mathematical Sciences}.
\newblock Springer, Berlin, Heidelberg, etc., 2001.

\end{thebibliography}

\end{document}